\date{}
\newtheorem{Theorem}{Theorem}[section]
\newtheorem{Corollary}[Theorem]{Corollary}
\newtheorem{Lemma}[Theorem]{Lemma}
\theoremstyle{definition}
\newtheorem{Definition}[Theorem]{Definition}
\theoremstyle{remark}
\numberwithin{equation}{section}
\title{A relation for a class of Racah polynomials}
\author{Ilia D. Mishev
\footnote{Department of Mathematics, University of Colorado at Boulder,
Campus Box 395, Boulder, CO 80309-0395, U.S.A.
E-mail address: ilia.mishev@colorado.edu}}
\begin{document}

\maketitle

\begin{abstract}

In this paper we derive a relation for a class of Racah polynomials that appear in a conjecture of Kresch and Tamvakis. 
The relation follows from an inversion formula for a transformation of a discrete sequence of complex
numbers $\{ x_n \}_{n=0}^{\infty}$. As a result of our inversion formula, we also obtain other
combinatorial identities.

\end{abstract}

\section{Introduction}

Let $p$ and $q$ be non-negative integers. Let $a_1, a_2, \ldots, a_p, b_1, b_2, \ldots, b_q, z \in \mathbb{C}$.
The hypergeometric series of type ${}_pF_q$ with numerator parameters $a_1, a_2, \ldots, a_p$ and 
denominator parameters $b_1, b_2, \ldots, b_q$ is defined by
\begin{equation}
\label{210}
 {}_pF_q \left[ {\displaystyle a_1,a_2,\ldots,a_p;
\atop \displaystyle b_1,b_2,\ldots,b_q;} z\right] =
\sum_{n=0}^{\infty} \frac{(a_1)_n(a_2)_n \cdots
(a_p)_n}{n!(b_1)_n(b_2)_n \cdots (b_q)_n}z^n,
\end{equation}
where the rising factorial $(a)_n$ is given by
\begin{equation*}
(a)_n=\left\{ \begin{array}{ll}
a(a+1)\cdots(a+n-1), & n>0,\\
1, & n=0.
\end{array} \right.
\end{equation*}

If no numerator parameter is a non-positive integer, we need no denominator parameter to be a non-positive integer.
In this case, the series in (\ref{210}) converges absolutely for all $z$ if $p<q+1$. If $p>q+1$, the series
converges only when $z=0$. In the case $p=q+1$, the series converges absolutely if $|z|<1$ or if
$|z|=1$ and $\textrm{Re}(\sum_{i=1}^qb_i-\sum_{i=1}^pa_i)>0$ 
(see \cite[p.\ 8]{Ba}).

If a numerator parameter is a non-positive integer, then, letting $-n$ be the largest non-positive integer
numerator parameter, only the first $n+1$ terms of the series (\ref{210}) are non-zero and the series
is said to terminate. In this case, we require that no denominator parameter be in the set
$\{-n+1, -n+2, \ldots \}$. We note that (\ref{210}) reduces to a polynomial in $z$ of degree $n$. 

When $z=1$, we say that the series is of unit argument and of type
${}_pF_q(1)$. If $\sum_{i=1}^qb_i-\sum_{i=1}^pa_i=1$, the
series is called Saalsch\"utzian.

We will make use of the Chu-Vandermonde formula 
(see \cite[p.\ 3]{Ba})
for the sum of a terminating ${}_2F_1(1)$ series:
\begin{equation}
\label{220}
{}_2F_1 \left[ {\displaystyle -n,a;
\atop \displaystyle b;} 1\right] 
=\frac{(b-a)_n}{(b)_n}.
\end{equation}

We will also use the binomial coefficient identities
\begin{equation}
\label{225}
\binom{n+1}{k}=\binom{n}{k}+\binom{n}{k-1},
\quad 1 \leq k \leq n,
\end{equation}
\begin{equation}
\label{230}
\binom{n}{k} \binom{k}{m} = \binom{n}{m} \binom{n-m}{k-m},
\quad 0 \leq m \leq k \leq n,
\end{equation}
and
\begin{equation}
\label{233}
\frac{\binom{m+k}{m}}{\binom{n+k}{n}}
=\frac{\binom{n}{m}}{\binom{n+k}{m+k}},
\quad 0 \leq m \leq n, \quad k \geq 0. 
\end{equation}

The Racah polynomials, first given by Wilson \cite{Wil}, are defined by
(see also \cite{KS})
\begin{eqnarray}
\label{240}
&&R_n(\lambda(x);\alpha,\beta,\gamma,\delta) \nonumber \\
&&={}_4F_3 \left[ {\displaystyle -n,n+\alpha+\beta+1,-x,x+\gamma+\delta+1;
\atop \displaystyle \alpha+1,\beta+\delta+1,\gamma+1;} 1\right], \\
&&n=0,1,2, \ldots, N, \nonumber 
\end{eqnarray}
where
\begin{equation*}
\lambda(x)=x(x+\gamma+\delta+1)
\end{equation*}
and
\begin{equation*}
\alpha+1=-N \textrm{ or } \beta+\delta+1=-N \textrm{ or }
\gamma+1=-N, \textrm{ with } N \textrm{ a non-negative integer}.
\end{equation*}

We note that the Racah polynomials are terminating Saalsch\"utzian
${}_4F_3(1)$ hypergeometric series.

The special case $\alpha = \beta = \gamma + \delta = 0, \gamma=T$, where
$T$ is a positive integer leads to the definition of
\begin{equation}
\label{250}
R_n(s,T):
=R_n(\lambda(s);0,0,T,-T)
={}_4F_3 \left[ {\displaystyle -n,n+1,-s,s+1;
\atop \displaystyle 1,1-T,1+T;} 1\right],
\end{equation}
where $0 \leq n,s \leq T-1$.

It is conjectured by Kresch and Tamvakis \cite{KT} that
\begin{equation}
\label{260}
|R_n(s,T)| \leq 1
\end{equation}
for all $0 \leq n,s \leq T-1, T \geq 1$. 
Special cases of the conjecture are proven by Kresch
and Tamvakis in \cite{KT}. Special cases of the conjecture
are also proven by Ismail and Simeonov \cite{IS}.
Furthermore, Ismail and Simeonov demonstrate asymptotics
for $R_n(s,T)$
in \cite{IS} that are in agreement with the conjecture.

\section{Main result}

\begin{Definition}
\label{D310}
Let $\{x_n\}_{n=0}^{\infty} \subseteq \mathbb{C}$. For each $n \geq 0$, we define
\begin{equation}
\label{310}
\tilde{x}_n=\sum_{k=0}^n (-1)^k \binom{n}{k} \binom{n+k}{k} x_k.
\end{equation}
\end{Definition}

We note that the transformation (\ref{310}) is linear.

In view of the formulas
\begin{equation}
\label{312}
(-1)^k \binom{n}{k} = \frac{(-n)_k}{k!}
\end{equation}
and
\begin{equation}
\label{313}
\binom{n+k}{k} = \frac{(n+1)_k}{k!},
\end{equation}
equation (\ref{310}) can also be written as
\begin{equation}
\label{314}
\tilde{x}_n=\sum_{k=0}^n \frac{(-n)_k(n+1)_k}{k!k!} x_k.
\end{equation}

We remark that the transformation in Definition \ref{D310} is inspired by
the binomial transform (introduced by Knuth in \cite{Knuth})
of a sequence $\{x_n\}_{n=0}^{\infty} \subseteq \mathbb{C}$
defined by
\begin{equation}
\label{316}
\hat{x}_n=\sum_{k=0}^n (-1)^k \binom{n}{k} x_k,
\quad n \geq 0.
\end{equation}
The inversion formula for the binomial transform is well-known 
(see \cite{Rior}) and is
\begin{equation}
\label{316}
x_n=\sum_{k=0}^n (-1)^k \binom{n}{k} \hat{x}_k,
\quad n \geq 0.
\end{equation}

Certain terminating hypergeometric series can be considered as
binomial transforms. For example, the Chu-Vandermonde formula (\ref{220}) can be
written as
\begin{equation*}
\sum_{k=0}^n (-1)^k \binom{n}{k} \frac{(a)_k}{(b)_k}
=\frac{(b-a)_n}{(b)_n},
\end{equation*}
and therefore we can conclude that the binomial transform of the sequence
$\left\{\frac{(a)_n}{(b)_n}\right\}_{n=0}^{\infty}$ is the sequence
$\left\{\frac{(b-a)_n}{(b)_n}\right\}_{n=0}^{\infty}$.

\begin{Theorem}
\label{T310}
Let $\{x_n\}_{n=0}^{\infty} \subseteq \mathbb{C}$. 
Let $\{\tilde{x}_n\}_{n=0}^{\infty} \subseteq \mathbb{C}$
be defined by (\ref{310}).
Then for each $n \geq 0$, we have
\begin{equation}
\label{320}
x_n=\sum_{k=0}^n (-1)^k \frac{(2k+1) \binom{n}{k}}{(n+k+1) \binom{n+k}{k}} \tilde{x}_k.
\end{equation}
\end{Theorem}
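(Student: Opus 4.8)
The plan is to prove that (\ref{320}) is the two-sided inverse of the transformation (\ref{310}) by direct substitution: I insert the definition (\ref{310}) of $\tilde{x}_k$ into the right-hand side of (\ref{320}) and interchange the two finite sums. After collecting the coefficient of each $x_m$, the claim reduces to the single orthogonality relation
\[
\sum_{k=m}^{n}(-1)^{k+m}\frac{(2k+1)\binom{n}{k}}{(n+k+1)\binom{n+k}{k}}\binom{k}{m}\binom{k+m}{m}=\delta_{nm},\qquad 0\le m\le n,
\]
where $\delta_{nm}$ is the Kronecker delta. Since every sum is finite, the interchange needs no justification, so the whole theorem rests on establishing this identity.

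Next I would rewrite each binomial coefficient as a ratio of factorials and simplify. Using $\binom{n}{k}/\binom{n+k}{k}=(n!)^{2}/\big((n-k)!(n+k)!\big)$ together with $\tfrac{1}{(n+k+1)(n+k)!}=\tfrac{1}{(n+k+1)!}$, the inner sum becomes $\tfrac{(n!)^2}{(m!)^2}$ times $\sum_{k}(-1)^{k+m}(2k+1)\frac{(k+m)!}{(n-k)!(n+k+1)!(k-m)!}$. I would then set $N=n-m$ and reindex by $j=k-m$; tracking the ratio of consecutive summands shows the sum is a constant multiple of the terminating series
\[
{}_3F_2\left[{\displaystyle -N,\,2m+1,\,m+\tfrac{3}{2}; \atop \displaystyle m+\tfrac{1}{2},\,2m+N+2;}\,1\right].
\]
A short computation of the parameter sum shows this ${}_3F_2(1)$ is \emph{not} Saalsch\"utzian, so the Chu--Vandermonde formula (\ref{220}) cannot be applied to it directly; this is the main obstacle.

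To get around it, I would exploit the fact that the numerator parameter $m+\tfrac{3}{2}$ and the denominator parameter $m+\tfrac{1}{2}$ differ by $1$, so that $(m+\tfrac{3}{2})_j/(m+\tfrac{1}{2})_j=(m+\tfrac{1}{2}+j)/(m+\tfrac{1}{2})=1+\tfrac{j}{m+1/2}$. Splitting the ${}_3F_2$ accordingly writes it as a sum of two pieces: the ``$1$'' piece is exactly ${}_2F_1[-N,2m+1;2m+N+2;1]$, while after absorbing the factor $j$ (via $j/j!=1/(j-1)!$ and a shift of index) the second piece is a constant multiple of ${}_2F_1[-(N-1),2m+2;2m+N+3;1]$. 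Both are terminating ${}_2F_1(1)$ series, so (\ref{220}) evaluates each in closed form. Converting the resulting Pochhammer symbols back to factorials and using $\tfrac{2m+1}{m+1/2}=2$, I expect the two closed forms to coincide, so they cancel whenever $N\ge 1$; for $N=0$ the second series is empty and the first equals $1$, matching the factor $\tfrac{(n!)^2}{(m!)^2}=1$. This establishes the orthogonality relation, and hence the theorem.
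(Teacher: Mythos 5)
Your proposal is correct, but it takes a genuinely different route from the paper's. The paper argues by induction on the index $m$: it shows that the forward transform (\ref{314}) applied to the inverse coefficients $a_{n,m}=\tfrac{(2m+1)(-n)_m}{(n+1)_{m+1}}$ produces the Kronecker delta, using Chu--Vandermonde for the base case and, in the inductive step, an auxiliary sequence $b_{n,p+1}=1/(n+1)_{p+2}$, a polynomial-root argument (the numerator of $c_{n,p+1}$ is a polynomial of degree at most $p+1$ vanishing at $0,1,\ldots,p$, hence equal to $\alpha\prod_{i=0}^p(n-i)$), and the standalone Lemma \ref{L310} evaluating $\sum_{k=0}^{n-1}(-1)^k(2k+1)\binom{2n+1}{n-k}$ to pin down $\alpha$. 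You compose in the opposite (and more literal) order --- substituting (\ref{310}) into (\ref{320}) --- and reduce the theorem to the orthogonality relation $\sum_{k=m}^n(-1)^{k+m}\tfrac{(2k+1)\binom{n}{k}}{(n+k+1)\binom{n+k}{k}}\binom{k}{m}\binom{k+m}{m}=\delta_{nm}$, which is essentially the paper's Corollary \ref{C310}(a), i.e.\ identity (\ref{360}); the paper deduces that identity \emph{from} the theorem, whereas you prove it directly and obtain the theorem as a consequence. Your evaluation of the resulting non-Saalsch\"utzian ${}_3F_2(1)$ is sound: exploiting the unit-difference pair via $(m+\tfrac32)_j/(m+\tfrac12)_j=1+\tfrac{j}{m+1/2}$ splits it into ${}_2F_1[-N,2m+1;2m+N+2;1]$ plus $\tfrac{-2N}{2m+N+2}\,{}_2F_1[1-N,2m+2;2m+N+3;1]$, and Chu--Vandermonde gives both closed forms equal to $\tfrac{(2N)!\,(2m+N+1)!}{N!\,(2m+2N+1)!}$, so they cancel for $N\ge 1$, while $N=0$ gives $1$; this is exactly $\delta_{nm}$. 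As for what each approach buys: yours is non-inductive and self-contained, dispenses with Lemma \ref{L310} entirely, and makes the matrix-orthogonality structure (and Corollary \ref{C310}(a)) the primary object; the paper's induction never has to confront a ${}_3F_2$, working only with ${}_2F_1$ evaluations and elementary binomial manipulations, and it isolates Lemma \ref{L310}, an identity of some independent combinatorial interest.
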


Equation (\ref{320}) gives us the inverse transformation of the transformation defined in (\ref{310}).

Using the formulas (\ref{312}) and (\ref{313}), we can also write equation (\ref{320}) as
\begin{equation}
\label{330}
x_n=\sum_{k=0}^n \frac{(2k+1)(-n)_k}{(n+1)_{k+1}} \tilde{x}_k,
\quad n \geq 0.
\end{equation}

Before we prove Theorem \ref{T310}, we need the following lemma:

\begin{Lemma}
\label{L310}
For each $n \geq 1$, we have
\begin{equation}
\label{340}
\sum_{k=0}^{n-1} (-1)^k (2k+1) \binom{2n+1}{n-k} = (-1)^{n-1} (2n+1).
\end{equation}
\end{Lemma}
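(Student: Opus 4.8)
The plan is to evaluate the sum on the left of (\ref{340}) directly, by reindexing and then reducing everything to partial sums of alternating binomial coefficients. First I would substitute $j=n-k$, noting that $j$ runs from $n$ down to $1$ as $k$ runs from $0$ to $n-1$, and that $(-1)^k=(-1)^{n-j}=(-1)^n(-1)^{j}$ while $2k+1=(2n+1)-2j$. This rewrites the left-hand side of (\ref{340}) as
\begin{equation*}
(-1)^n \sum_{j=1}^{n} (-1)^{j}\bigl[(2n+1)-2j\bigr]\binom{2n+1}{j}.
\end{equation*}
It therefore suffices to show that $\sum_{j=1}^{n}(-1)^{j}\bigl[(2n+1)-2j\bigr]\binom{2n+1}{j}=-(2n+1)$, since multiplying by the prefactor $(-1)^n$ then gives $(-1)^{n-1}(2n+1)$.

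Next I would split this sum along the bracket into a constant piece and a piece carrying the factor $j$. The latter I would treat with the absorption identity $j\binom{2n+1}{j}=(2n+1)\binom{2n}{j-1}$, so that after shifting the index both resulting sums become partial sums of alternating binomial coefficients. To evaluate these I would use the closed form
\begin{equation*}
\sum_{i=0}^{m}(-1)^i\binom{N}{i}=(-1)^m\binom{N-1}{m},
\end{equation*}
which follows from Pascal's rule (\ref{225}) by telescoping, applied once with $(N,m)=(2n+1,n)$ and once with $(N,m)=(2n,n-1)$.

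Finally I would collect the two contributions. Using the central-binomial relation $\binom{2n}{n}=2\binom{2n-1}{n-1}$ to put both binomial terms on a common footing, the terms involving $\binom{2n-1}{n-1}$ cancel exactly, and what survives is precisely $-(2n+1)$. Restoring the prefactor $(-1)^n$ then yields (\ref{340}).

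The main obstacle is the linear factor $(2k+1)$; the whole argument hinges on rewriting it as $(2n+1)-2(n-k)$ (equivalently $(2n+1)-2j$ after reindexing) and absorbing the resulting $j\binom{2n+1}{j}$ into $\binom{2n}{j-1}$. Once this is done the problem collapses to standard partial-sum evaluations, and the cancellation at the last step is exactly what produces the clean right-hand side. An alternative route would be induction on $n$ via (\ref{225}), but relating the $n$ case to the $n+1$ case requires applying Pascal's rule twice and is appreciably messier than the direct computation sketched above.
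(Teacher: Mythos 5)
Your proof is correct, and it takes a genuinely different route from the paper's. The paper checks $n=1,2,3,4$ by hand and then, for $n\geq 5$, works with the sum as written: it splits off the first two and last two terms, applies Pascal's rule (\ref{225}) twice to each $\binom{2n+1}{n-k}$ to write it as $\binom{2n-1}{n-k}+2\binom{2n-1}{n-k-1}+\binom{2n-1}{n-k-2}$, and regroups everything into three pieces $A_1+A_2+A_3$, of which $A_1$ vanishes by the symmetry $\binom{2n-1}{n}=\binom{2n-1}{n-1}$, $A_2$ vanishes after merging three shifted sums, and $A_3$ produces $(-1)^{n-1}(2n+1)$; this is close in spirit to the ``apply Pascal twice, but messier'' alternative you mention at the end, though it is a direct regrouping rather than an induction on $n$. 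Your route --- reindexing by $j=n-k$, writing $2k+1=(2n+1)-2j$, absorbing $j\binom{2n+1}{j}=(2n+1)\binom{2n}{j-1}$, and evaluating both resulting partial alternating sums by $\sum_{i=0}^{m}(-1)^i\binom{N}{i}=(-1)^m\binom{N-1}{m}$ --- checks out: the two contributions come to $(2n+1)\bigl[(-1)^n\binom{2n}{n}-1\bigr]$ and $-2(2n+1)(-1)^n\binom{2n-1}{n-1}$, and since $\binom{2n}{n}=2\binom{2n-1}{n-1}$ their sum is exactly $-(2n+1)$, as you claim. What your approach buys is uniformity and brevity: no separate small-$n$ verification, no boundary terms to split off, and the whole lemma reduces to two applications of one standard closed form; the paper's proof uses nothing beyond Pascal's rule itself but pays for that economy of tools with substantially more bookkeeping.
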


\begin{proof}
The result is directly verified when $n=1,2,3,4$. Assume $n \geq 5$.
We let
$$A=\sum_{k=0}^{n-1} (-1)^k (2k+1) \binom{2n+1}{n-k}.$$
We split off the first two terms and the last two terms of $A$ to get
\begin{eqnarray*}
&&A=\binom{2n+1}{n} - 3\binom{2n+1}{n-1} 
+\sum_{k=2}^{n-3} (-1)^k (2k+1) \binom{2n+1}{n-k} \\
&&+(-1)^{n-2}(2n-3)\binom{2n+1}{2}
+(-1)^{n-1}(2n-1)\binom{2n+1}{1}.
\end{eqnarray*}
For $0 \leq k \leq n-2$, we apply (\ref{225}) twice to
$\binom{2n+1}{n-k}$ and obtain
\begin{eqnarray*}
&&\binom{2n+1}{n-k}
=\binom{2n}{n-k}+\binom{2n}{n-k-1} \\
&&=\binom{2n-1}{n-k}+2\binom{2n-1}{n-k-1}+\binom{2n-1}{n-k-2}.
\end{eqnarray*}
Also, by (\ref{225}),
\begin{equation*}
\binom{2n+1}{1}=\binom{2n}{1}+\binom{2n}{0} 
=\binom{2n-1}{1}+\binom{2n-1}{0}+\binom{2n}{0}.
\end{equation*}
Therefore,
\begin{eqnarray*}
&&A=\binom{2n-1}{n}+2\binom{2n-1}{n-1}+\binom{2n-1}{n-2} \\
&&-3\left(\binom{2n-1}{n-1}+2\binom{2n-1}{n-2}+\binom{2n-1}{n-3}\right) \\
&&+\sum_{k=2}^{n-3} (-1)^k (2k+1) 
\left(\binom{2n-1}{n-k}+2\binom{2n-1}{n-k-1}+\binom{2n-1}{n-k-2}\right) \\
&&+(-1)^{n-2}(2n-3)\left(\binom{2n-1}{2}+2\binom{2n-1}{1}+\binom{2n-1}{0}\right) \\
&&+(-1)^{n-1}(2n-1)\left(\binom{2n-1}{1}+\binom{2n-1}{0}+\binom{2n}{0}\right). \\
\end{eqnarray*}
From here, we can write
$$A=A_1+A_2+A_3,$$
where
\begin{equation*}
A_1=\binom{2n-1}{n}+2\binom{2n-1}{n-1}-3\binom{2n-1}{n-1}, 
\end{equation*}
\begin{eqnarray*}
&&A_2=\binom{2n-1}{n-2} 
-3\left(2\binom{2n-1}{n-2}+\binom{2n-1}{n-3}\right) \\
&&+\sum_{k=2}^{n-3} (-1)^k (2k+1) 
\left(\binom{2n-1}{n-k}+2\binom{2n-1}{n-k-1}+\binom{2n-1}{n-k-2}\right) \\
&&+(-1)^{n-2}(2n-3)\left(\binom{2n-1}{2}+2\binom{2n-1}{1}\right) \\
&&+(-1)^{n-1}(2n-1)\binom{2n-1}{1},
\end{eqnarray*}
and
\begin{equation*}
A_3=(-1)^{n-2}(2n-3)\binom{2n-1}{0}
+(-1)^{n-1}(2n-1)\left(\binom{2n-1}{0}+\binom{2n}{0}\right).
\end{equation*}

Since $\binom{2n-1}{n}=\binom{2n-1}{n-1}$, we have that $A_1=0$.

To evaluate $A_2$, we have
\begin{eqnarray*}
&&A_2=\sum_{k=0}^{n-3}(-1)^k(2k+1)\binom{2n-1}{n-k-2} \\
&&+\sum_{k=1}^{n-2}(-1)^k(2k+1)(2)\binom{2n-1}{n-k-1} 
+\sum_{k=2}^{n-1}(-1)^k(2k+1)\binom{2n-1}{n-k} \\
&&=\sum_{k=1}^{n-2}(-1)^{k-1}(2k-1)\binom{2n-1}{n-k-1} 
+\sum_{k=1}^{n-2}(-1)^k(4k+2)\binom{2n-1}{n-k-1} \\
&&+\sum_{k=1}^{n-2}(-1)^{k+1}(2k+3)\binom{2n-1}{n-k-1} 
=0,
\end{eqnarray*}
where the last equality follows by combining the three sums into
one and factoring out 
$(-1)^{k-1}\binom{2n-1}{n-k-1}$.

Finally,
\begin{equation*}
A_3=(-1)^{n-2}(2n-3)+(-1)^{n-1}(4n-2) 
=(-1)^{n-1}(2n+1).
\end{equation*}

Therefore,
\begin{equation*}
A=A_1+A_2+A_3
= (-1)^{n-1} (2n+1),
\end{equation*}
which completes the proof.
\end{proof}

\begin{proof}[Proof of Theorem \ref{T310}]
We will prove that equation (\ref{330}) holds for
each $n \geq 0$. In our proof, we will use the equivalent form 
(\ref{314}) of (\ref{310}).

For $n,m \geq 0$, we define
\begin{equation*}
a_{n,m}=
\frac{(2m+1)(-n)_m}{(n+1)_{m+1}}.
\end{equation*}
We note that $a_{n,m}=0$ for $m>n$, since
$(-n)_m=0$ for $m>n$.
In order to prove the theorem, we need to show
that for each $m \geq 0$, the transformation (\ref{314})
of the sequence $\{a_{n,m}\}_{n=0}^{\infty}$
is given by
\begin{equation}
\label{350}
\tilde{a}_{n,m}=\left\{
\begin{array}{ll}
1, & n=m,\\
0, & n \neq m.
\end{array} \right.
\end{equation}

When $m=0$, we have
$\{a_{n,0}\}_{n=0}^{\infty}
=\{\frac{1}{n+1}\}_{n=0}^{\infty}$.
Using the Chu-Vandermonde formula
(\ref{220}), we have
\begin{eqnarray*}
&&\tilde{a}_{n,0}
=\sum_{k=0}^n \frac{(-n)_k(n+1)_k}{k!k!} \frac{1}{k+1} \\
&&=\sum_{k=0}^n \frac{(-n)_k(n+1)_k}{k!(2)_k} 
={}_2F_1 \left[ {\displaystyle -n,n+1;
\atop \displaystyle 2;} 1\right] \\
&&=\frac{(1-n)_n}{(2)_n}
= \left\{ 
\begin{array}{ll}
1, & n=0,\\
0, & n \neq 0.
\end{array} \right. 
\end{eqnarray*}
Therefore (\ref{350}) holds for $m=0$.
Assume now that (\ref{350}) holds for all
$m=0,1,\ldots,p$, for some $p \geq 0$. We will show that
(\ref{350}) holds for $m=p+1$ and the result will 
follow by induction.

For $n \geq 0$, we define
\begin{equation*}
b_{n,p+1}=\frac{1}{(n+1)_{p+2}}.
\end{equation*}
Using the Chu-Vandermonde formula
(\ref{220}), we have
\begin{eqnarray*}
&&\tilde{b}_{n,p+1}
=\sum_{k=0}^n \frac{(-n)_k(n+1)_k}{k!k!} \frac{1}{(k+1)_{p+2}} \\
&&=\frac{1}{(p+2)!}\sum_{k=0}^n \frac{(-n)_k(n+1)_k}{k!(p+3)_k} 
=\frac{1}{(p+2)!} {}_2F_1 \left[ {\displaystyle -n,n+1;
\atop \displaystyle p+3;} 1\right] \\
&&=\frac{1}{(p+2)!} \frac{(p+2-n)_n}{(p+3)_n}
=\frac{(p+2-n)_n}{(p+n+2)!}.
\end{eqnarray*}
When $n>p+1$, we have that $(p+2-n)_n=0$ and so
$\tilde{b}_{n,p+1}=0$. When $n=p+1$, we have
\begin{equation*}
\tilde{b}_{p+1,p+1}=\frac{(1)_{p+1}}{(2p+3)!}
=\frac{1}{(p+2)_{p+2}}.
\end{equation*}
Using the induction hypothesis and the fact that
the transformation (\ref{314}) is linear, it follows that if
we define
\begin{equation*}
c_{n,p+1}=(p+2)_{p+2}b_{n,p+1}
-\sum_{m=0}^p \frac{(p+2)_{p+2}(p+2-m)_m}{(p+m+2)!}a_{n,m},
\quad n \geq 0,
\end{equation*}
then we will have
\begin{equation*}
\tilde{c}_{n,p+1}=\left\{
\begin{array}{ll}
1, & n=p+1,\\
0, & n \neq p+1.
\end{array} \right.
\end{equation*}

It remains to show that
$c_{n,p+1}=a_{n,p+1}$ for all $n \geq 0$.

We compute
$$\frac{(p+2)_{p+2}(p+2-m)_m}{(p+m+2)!}
=\frac{(2p+3)!}{(p+1-m)!(p+m+2)!}
=\binom{2(p+1)+1}{p+1-m}.$$
Hence for each $n \geq 0$,
\begin{equation}
\label{355}
c_{n,p+1}=\frac{(p+2)_{p+2}}{(n+1)_{p+2}}
-\sum_{m=0}^p \binom{2(p+1)+1}{p+1-m} \frac{(2m+1)(-n)_m}{(n+1)_{m+1}}.
\end{equation}
By combining all terms under a common denominator, it follows that we
can write $c_{n,p+1}=\frac{f(n)}{(n+1)_{p+2}}$, where $f(n)$
is a polynomial in $n$ of degree at most $p+1$. Now since
$\tilde{c}_{n,p+1}=0$ for $n=0,1,\ldots,p$, we must have
$c_{n,p+1}=0$ for $n=0,1,\ldots,p$. But then 
$f(n)=0$ for $n=0,1,\ldots,p$ and so we must have
$f(n)=\alpha\prod_{i=0}^p(n-i)$ for some 
$\alpha \in \mathbb{R}$. 
In view of (\ref{355}),
$$\alpha=-\sum_{m=0}^p (-1)^m (2m+1) \binom{2(p+1)+1}{p+1-m}.$$
By Lemma \ref{L310}, 
$\alpha=(-1)^{p+1} (2(p+1)+1)$. Therefore, 
\begin{eqnarray*}
&&c_{n,p+1}
=\frac{(-1)^{p+1} (2(p+1)+1)\prod_{i=0}^p(n-i)}{(n+1)_{p+2}} \\
&&=\frac{(2(p+1)+1)(-n)_{p+1}}{(n+1)_{p+2}}
=a_{n,p+1} \mbox{ for all } n \geq 0,
\end{eqnarray*}
which shows that
(\ref{350}) holds for $m=p+1$ and completes the proof by induction.  
\end{proof}

Interesting consequences of Theorem \ref{T310} are given below:

\begin{Corollary}
\label{C310}
We have the following identities:
\begin{enumerate}
\item[$(a)$] For $0 \leq m \leq n$, 
\begin{equation}
\label{360}
\sum_{k=m}^n (-1)^k
\frac{(2k+1)\binom{n-m}{k-m}}{(n+k+1)\binom{n+k}{m+k}}
=\left\{
\begin{array}{ll}
(-1)^n, & m=n,\\
0, & 0 \leq m < n.
\end{array}
\right.
\end{equation}
\item[$(b)$] For $n \geq 0$,
\begin{equation}
\label{363}
\sum_{k=0}^n (-1)^k \binom{n}{k} \binom{n+k}{k} \frac{1}{2k+1}
=\frac{1}{2n+1}.
\end{equation}
\end{enumerate}
\end{Corollary}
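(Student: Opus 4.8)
The plan is to obtain both identities by feeding suitably chosen sequences into the inversion formula of Theorem \ref{T310}, exploiting that (\ref{310}) and (\ref{320}) are mutually inverse, triangular transformations. For part $(a)$ I would apply the theorem to a single standard basis sequence; for part $(b)$ I would identify $\{1/(2n+1)\}$ as a fixed point of the transformation.

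For $(a)$, fix $m \ge 0$ and take $x_k = \delta_{k,m}$ (that is, $x_m = 1$ and $x_k = 0$ otherwise). First I would compute the forward transform (\ref{310}): only the $k=m$ term survives, giving $\tilde{x}_n = (-1)^m \binom{n}{m}\binom{n+m}{m}$ for $n \ge m$ and $\tilde{x}_n = 0$ for $n < m$. Substituting this into the inverse formula (\ref{320}), and using that the theorem guarantees $x_n = \delta_{n,m}$, I obtain
\[
\delta_{n,m} = (-1)^m \sum_{k=m}^{n} (-1)^k \frac{(2k+1)\binom{n}{k}\binom{k}{m}\binom{k+m}{m}}{(n+k+1)\binom{n+k}{k}}.
\]
The remaining task is to reshape the summand into the form appearing in (\ref{360}). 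I would first use (\ref{230}) to write $\binom{n}{k}\binom{k}{m} = \binom{n}{m}\binom{n-m}{k-m}$, and then use (\ref{233}) (together with $\binom{n+k}{k} = \binom{n+k}{n}$) to replace $\binom{k+m}{m}/\binom{n+k}{n}$ by $\binom{n}{m}/\binom{n+k}{m+k}$. This collapses the three binomials into $\binom{n}{m}^2 \binom{n-m}{k-m}/\binom{n+k}{m+k}$, so the displayed identity becomes $\delta_{n,m} = (-1)^m \binom{n}{m}^2 S$, where $S$ is exactly the left-hand side of (\ref{360}). Solving for $S$ and noting $\binom{n}{n}=1$ then yields $(-1)^n$ when $m=n$ and $0$ when $m<n$.

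For $(b)$, I would instead evaluate the inverse transform of $\{1/(2k+1)\}$. Setting $\tilde{x}_k = 1/(2k+1)$ in the Pochhammer form (\ref{330}), the factor $2k+1$ cancels and the sum becomes $\frac{1}{n+1}\sum_{k=0}^n \frac{(-n)_k}{(n+2)_k} = \frac{1}{n+1}\,{}_2F_1 \left[ {\displaystyle -n,1; \atop \displaystyle n+2;} 1\right]$. By the Chu--Vandermonde formula (\ref{220}) this equals $\frac{1}{n+1}\cdot\frac{(n+1)_n}{(n+2)_n} = \frac{1}{2n+1}$. Hence $\{1/(2n+1)\}$ is fixed by the inverse transformation (\ref{320}). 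Because both (\ref{310}) and (\ref{320}) are triangular with nonzero diagonal entries, the one-sided inversion established in Theorem \ref{T310} is in fact two-sided; therefore a sequence fixed by (\ref{320}) is also fixed by the forward transformation (\ref{310}), which is precisely the assertion (\ref{363}).

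The main obstacle is the binomial bookkeeping in $(a)$: correctly applying (\ref{233}) to merge $\binom{k+m}{m}$, $\binom{n+k}{n}$, and the leftover $\binom{n}{m}$ into a single quotient, and tracking the resulting global factor $\binom{n}{m}^2$ so that the cases $m=n$ and $m<n$ come out as claimed. Part $(b)$ is routine once one commits to using the inverse transform (\ref{330}) rather than attacking the ${}_3F_2$ on the left of (\ref{363}) directly; the latter would require Saalsch\"utz's theorem instead of the terminating Chu--Vandermonde sum already available in the paper.
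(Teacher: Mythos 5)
Your proof is correct and takes essentially the same approach as the paper: part $(a)$ is the paper's argument specialized to the basis sequences $x_k=\delta_{k,m}$ (the paper instead switches the order of summation for an arbitrary sequence and identifies coefficients, which amounts to the same thing), using the identical binomial identities (\ref{230}) and (\ref{233}), and part $(b)$ reproduces the paper's Chu--Vandermonde computation. Your explicit triangularity argument in $(b)$, upgrading the one-sided inversion of Theorem \ref{T310} to a two-sided one, makes precise a step the paper leaves implicit when it asserts that it ``is enough'' to show $\{1/(2n+1)\}_{n=0}^{\infty}$ is fixed by the inverse transformation.
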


\begin{proof}
$(a)$ Using Theorem \ref{T310} and then switching the order of summation, 
we have that for every $n \geq 0$, 
\begin{eqnarray*}
&&x_n=\sum_{k=0}^n (-1)^k \frac{(2k+1) \binom{n}{k}}{(n+k+1) \binom{n+k}{k}} \tilde{x}_k \\
&&=\sum_{k=0}^n \left( (-1)^k \frac{(2k+1) \binom{n}{k}}{(n+k+1) \binom{n+k}{k}}
\left( \sum_{m=0}^k (-1)^m \binom{k}{m} \binom{k+m}{m} x_m \right) \right) \\
&&=\sum_{m=0}^n \left( (-1)^m \left(
\sum_{k=m}^n (-1)^k
\frac{(2k+1)\binom{n}{k}\binom{k}{m}\binom{k+m}{m}}
{(n+k+1)\binom{n+k}{k}} \right) x_m \right) \\
&&=\sum_{m=0}^n \left( (-1)^m \binom{n}{m}^2 
\left( \sum_{k=m}^n (-1)^k
\frac{(2k+1)\binom{n-m}{k-m}}{(n+k+1)\binom{n+k}{m+k}}
\right) x_m \right),
\end{eqnarray*}
where the last equality follows from (\ref{230}) and (\ref{233}). 
Hence we must have
\begin{equation*}
(-1)^m \binom{n}{m}^2 
\left( \sum_{k=m}^n (-1)^k
\frac{(2k+1)\binom{n-m}{k-m}}{(n+k+1)\binom{n+k}{m+k}}
\right)
=\left\{
\begin{array}{ll}
1, & m=n,\\
0, & 0 \leq m < n,
\end{array} \right.
\end{equation*}
and this implies (\ref{360}).

$(b)$ It is enough to show that the sequence $\{\frac{1}{2n+1}\}_{n=0}^{\infty}$ is
fixed by the inverse transformation of (\ref{310}). Indeed, we have
\begin{eqnarray*}
&&\sum_{k=0}^n \frac{(2k+1)(-n)_k}{(n+1)_{k+1}} \frac{1}{2k+1}
=\sum_{k=0}^n \frac{(-n)_k}{(n+1)_{k+1}}
=\frac{1}{n+1}\sum_{k=0}^n \frac{(-n)_k}{(n+2)_k} \\
&&\frac{1}{n+1}{}_2F_1 \left[ {\displaystyle -n,1;
\atop \displaystyle n+2;} 1\right] 
=\frac{1}{n+1}\frac{(n+1)_n}{(n+2)_n}
=\frac{1}{2n+1},
\end{eqnarray*}
where in the next-to-last step we used the Chu-Vandermonde
formula (\ref{220}).
\end{proof}

The special case $m=0$ in (\ref{360}) gives
\begin{equation}
\label{365}
\sum_{k=0}^n (-1)^k
\frac{(2k+1)\binom{n}{k}}{(n+k+1)\binom{n+k}{k}}
=\left\{
\begin{array}{ll}
1, & n=0,\\
0, & n>0.
\end{array}
\right.
\end{equation}
In view of (\ref{330}), we can also write (\ref{365}) as
\begin{equation}
\label{366}
\sum_{k=0}^n \frac{(2k+1)(-n)_k}{(n+1)_{k+1}} 
=\left\{
\begin{array}{ll}
1, & n=0,\\
0, & n >0.
\end{array}
\right.
\end{equation}

We note that (\ref{363}) implies that the sequence $\{\frac{1}{2n+1}\}_{n=0}^{\infty}$ is
fixed by the transformation (\ref{310}). In fact, since in the last term of the sum
$$\tilde{x}_n=\sum_{k=0}^n (-1)^k \binom{n}{k} \binom{n+k}{k} x_k$$
the coefficient in front of $x_n$ is 
$(-1)^n \binom{2n}{n} \neq 1$ for $n>0$, it follows that,
up to constant multiples, the sequence $\{\frac{1}{2n+1}\}_{n=0}^{\infty}$ is
the only one fixed by the transformation (\ref{310}).

\begin{Corollary}
\label{C320}
Let $0 \leq s \leq T-1$. Then for every $m$ such that
$0 \leq m \leq T-1$, we have
\begin{equation}
\label{369}
\sum_{n=0}^m (-1)^n
\frac{(2n+1)\binom{m}{n}}{(m+n+1)\binom{m+n}{n}}
R_n(s,T)=\frac{(-s)_m(s+1)_m}{(1-T)_m(1+T)_m}.
\end{equation}
\end{Corollary}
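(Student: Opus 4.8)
The plan is to recognize the left-hand side of (\ref{369}) as nothing but the inverse transformation (\ref{320}) of Theorem \ref{T310}, read with $m$ in the role of $n$ and $n$ in the role of the summation index $k$, applied to the choice $\tilde{x}_n := R_n(s,T)$. With this identification in mind, I would introduce the candidate sequence
\[
x_k = \frac{(-s)_k(s+1)_k}{(1-T)_k(1+T)_k},
\]
whose $m$-th term is exactly the conjectured right-hand side. Since $0 \leq m \leq T-1$, for every $0 \leq k \leq m$ we have $T > k$, so none of the factors $1-T, 2-T, \ldots, k-T$ vanish and the denominators $(1-T)_k$ (and trivially the $(1+T)_k$) are nonzero; thus the terms $x_0, \ldots, x_m$ that enter the finite sums are well-defined. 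To fit the hypotheses of Theorem \ref{T310} verbatim one may extend the sequence arbitrarily, say $x_k = 0$ for $k > m$, which is harmless because relation (\ref{320}) at index $m$ involves only indices up to $m$.

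The single substantive step is to verify that the forward transform of this sequence is precisely the Racah polynomial, i.e.\ that $\tilde{x}_n = R_n(s,T)$ for $0 \leq n \leq m$. Here I would use the equivalent form (\ref{314}) of Definition \ref{D310}, write $k!\,k! = k!\,(1)_k$, and compute
\[
\tilde{x}_n = \sum_{k=0}^n \frac{(-n)_k(n+1)_k}{k!\,k!}\,\frac{(-s)_k(s+1)_k}{(1-T)_k(1+T)_k} = \sum_{k=0}^n \frac{(-n)_k(n+1)_k(-s)_k(s+1)_k}{k!\,(1)_k(1-T)_k(1+T)_k},
\]
recognizing the last series as exactly the terminating, Saalsch\"utzian ${}_4F_3(1)$ appearing in (\ref{250}). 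This is a direct match of parameters rather than a genuine computation, so it is the conceptual crux but not a technical hurdle.

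Once $\{R_n(s,T)\}$ is identified as the transform (\ref{310}) of $\{x_k\}$, the conclusion is immediate: applying the inversion formula (\ref{320}) of Theorem \ref{T310} at index $m$ recovers
\[
x_m = \sum_{n=0}^m (-1)^n \frac{(2n+1)\binom{m}{n}}{(m+n+1)\binom{m+n}{n}}\, R_n(s,T),
\]
and since $x_m = \frac{(-s)_m(s+1)_m}{(1-T)_m(1+T)_m}$, this is precisely (\ref{369}). I do not expect any real obstacle beyond spotting the correct sequence $\{x_k\}$; the whole force of the corollary lies in the observation that the Racah polynomial $R_n(s,T)$ is literally the image under (\ref{310}) of the hypergeometric-ratio sequence, after which the statement is a direct instance of the inversion already established in Theorem \ref{T310}.
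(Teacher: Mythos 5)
Your proposal is correct and follows essentially the same route as the paper: define $x_k$ as the hypergeometric ratio $\frac{(-s)_k(s+1)_k}{(1-T)_k(1+T)_k}$ (extended by zero to make an infinite sequence), identify $\tilde{x}_n = R_n(s,T)$ by matching the terminating ${}_4F_3(1)$ in (\ref{250}), and invoke the inversion formula of Theorem \ref{T310} at index $m$. The only cosmetic difference is where you truncate the sequence ($k>m$ versus the paper's $n>T-1$), which is immaterial since the identity at index $m$ involves only terms up to $m$.
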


\begin{proof}
Let
$$x_n=\left\{
\begin{array}{ll}
\frac{(-s)_n(s+1)_n}{(1-T)_n(1+T)_n}, & 0 \leq n \leq T-1\\
0, & n>T-1.
\end{array}\right.$$
Then for $0 \leq n \leq T-1$,
\begin{eqnarray*}
&&R_n(s,T)={}_4F_3 \left[ {\displaystyle -n,n+1,-s,s+1;
\atop \displaystyle 1,1-T,1+T;} 1\right] \\
&&=\sum_{k=0}^n (-1)^k
\binom{n}{k}\binom{n+k}{k}
\frac{(-s)_k(s+1)_k}{(1-T)_k(1+T)_k}
=\tilde{x}_n
\end{eqnarray*}
Theorem \ref{T310} now yields the result.
\end{proof}

In view of (\ref{330}), we can also write (\ref{369}) as
\begin{equation}
\label{372}
\sum_{n=0}^m \frac{(2n+1)(-m)_n}{(m+1)_{n+1}} 
R_n(s,T)=\frac{(-s)_m(s+1)_m}{(1-T)_m(1+T)_m},
\end{equation}
for all $m$ such that $0 \leq m \leq T-1$.

We note that
$\frac{(-s)_m(s+1)_m}{(1-T)_m(1+T)_m}=0$
if $s+1 \leq m \leq T-1$, and so we have
\begin{equation}
\label{375}
\sum_{n=0}^m (-1)^n
\frac{(2n+1)\binom{m}{n}}{(m+n+1)\binom{m+n}{n}}
R_n(s,T)=0,
\quad s+1 \leq m \leq T-1,
\end{equation}
or, equivalently,
\begin{equation}
\label{378}
\sum_{n=0}^m \frac{(2n+1)(-m)_n}{(m+1)_{n+1}} 
R_n(s,T)=0,
\quad s+1 \leq m \leq T-1.
\end{equation}

\end{document}